\documentclass[11pt, a4paper]{article}

\usepackage[utf8]{inputenc} 
\usepackage[T1]{fontenc}

\usepackage{amsmath, amssymb, amsthm}

\usepackage[margin=2.5cm]{geometry}   
\usepackage{hyperref}                 
\usepackage{graphicx}              
\usepackage{xcolor}                 

\usepackage{subcaption}
\usepackage{amssymb,amsmath,amsfonts,amsxtra,amsthm,stmaryrd}
\usepackage{mathrsfs}
\usepackage{frcursive}
\usepackage{color}
\usepackage{tkz-tab}
\usepackage{float}
\usepackage{tikz}
\usepackage{siunitx}
\usepackage[european, straightvoltages, RPvoltages]{circuitikz}
\usepackage{authblk}

\DeclareMathOperator{\Mult}{Mult}

\newcommand{\Nn}{\mathbb{N}}
\newcommand{\Qq}{\mathbb{Q}}

\newcommand{\Cc}{\mathbb{C}}
\newcommand{\Pp}[1]{\mathbb{C}\{\!\{#1\}\!\}}
\newcommand{\CC}[1]{(\Cc^*)^#1}
\newcommand{\M}[1]{\Mult_{#1}}
\newcommand{\Mun}{\Mult_{\textbf{1}}}

\newcommand{\W}{\text{Wrons}}

\usepackage{aliascnt}

\newcommand{\mynewtheorem}[2]{
  \newaliascnt{#1}{dummy}
  \newtheorem{#1}[#1]{#2}
  \aliascntresetthe{#1}
  \expandafter\def\csname #1autorefname\endcsname{#2}
}
 
\theoremstyle{plain}
  \mynewtheorem{theorem}{Theorem}
  \mynewtheorem{proposition}{Proposition}
  \mynewtheorem{corollaire}{Corollary}
  \mynewtheorem{lemme}{Lemma}

\theoremstyle{definition}
  \mynewtheorem{definition}{Definition}
  \mynewtheorem{exemple}{Example}
\theoremstyle{remark}
  \mynewtheorem{remark}{Remark}
  \mynewtheorem{question}{Question}
\newtheorem*{thm:associativity}{Theorem \ref{thm:associativity}}

\title{Intersection multiplicity of a sparse curve and a trinomial curve}
\author{Jean Mousseau}
\affil{Laboratoire de Mathématiques, UMR 5127 CNRS, Universit{\'e} Savoie Mont Blanc} 
\date{}

\begin{document}

\maketitle

\begin{abstract}

    Let $f \in \Cc[x,y]$ be a trinomial and let $g \in \Cc[x,y]$ be a sparse polynomial with $N$ monomials ($N \ge 1$).
     We address the problem of bounding the local intersection multiplicity of the bivariate polynomial system $f = g = 0$ at an isolated
      solution $q \in \CC{2}$ only in terms of $N$. Our main result establishes that the local multiplicity of the system at any isolated 
      solution $q \in \CC{2}$ is at most $\frac{(N-1)(N+2)}{2}$, independently of the degrees of $f$ and $g$. Furthermore, we know that this bound is sharp for $N=3$. 

\end{abstract}

\section{Introduction}

In classical algebraic geometry, the complexity of a polynomial system is typically measured by the degrees of the equations, as epitomized by Bézout's
 theorem. This theorem bounds the sum of the intersection multiplicities of all isolated solutions by the product of the degrees. However, fewnomial theory 
 shifts this perspective, seeking bounds that depend solely on the number of non-zero terms (monomials) appearing in the polynomials.

Indeed, by Descartes' rule of signs \cite{descartes2022geometrie}, a univariate polynomial with $t$ terms has at most $t-1$ positive roots. Khovanskii 
\cite{kho80} generalized this type of bound to multivariate polynomials: a system of $n$ polynomials in $n$ variables involving a total of $t$ distinct
 monomials has at most 
\begin{equation*}    
    2^{\binom{t-1}{2}} (n + 1)^{t-1}
\end{equation*}
non-degenerate positive solutions. Setting $m = t - n - 1$ as the \emph{codimension} of the system, this bound was later dramatically improved by Bihan
 and Sottile \cite{bihan2007new} to
\begin{equation*}
    \frac{e^2 + 3}{4} 2^{\binom{m}{2}} n^{m}.
\end{equation*}

While bounding the number of real roots has been extensively studied since Khovanskii's foundational work \cite{khovanskii1991fewnomials}, controlling the 
intersection multiplicity of complex solutions in terms of sparse data remains largely open. The first result in this direction was an analogue of Descartes' 
rule of signs due to Hajós \cite{hajos1953solution}: the multiplicity of any nonzero root of a univariate polynomial with $t$ monomials is at most $t - 1$.
 For multivariate systems of $n$ equations in $n$ variables with a total of $t$ distinct monomials, Gabrielov \cite{gabrielov1995multiplicities} established 
 an upper bound, given by 
\begin{equation*}
    2^{\binom{t}{2}}(n+1)^t,
\end{equation*}
on the multiplicity of any isolated solution in $\CC{n}$. However, his bound is of order $2^{\mathcal{O}(t^2)}$, leaving the existence of a purely polynomial 
bound in $t$ (for fixed $n$) as an open question.

A striking parallel emerges when comparing the known polynomial bounds (for fixed $n$) for the number of positive solutions with those for intersection 
multiplicities in the algebraic torus $\CC{n}$:

     \textit{Dimension one ($n=1$):} Descartes' rule of signs gives the sharp bound $t-1$ for positive roots, while Hajós' lemma \cite{hajos1953solution}
     yields the exact same bound $t-1$ for root multiplicity.

     \textit{Codimension zero ($m=0$):} Such systems are reducible to binomial systems and have at most $1$ non-degenerate positive solution, and similarly
     intersection multiplicity at most $1$ in the torus \cite{bihan2025sparsesystemshighlocal}.

     \textit{Codimension one ($m=1$):} Bihan \cite{bihan2007polynomial} proved that a system of $n$ equations in $n$ variables with $t = n + 2$ monomials
     has at most $n + 1$ non-degenerate positive solutions. Using Gale duality, Bihan, Dickenstein, and Forsgård \cite{bihan2025sparsesystemshighlocal} established the exact 
     counterpart for intersection multiplicities, showing that the multiplicity of an isolated torus solution is also bounded by $n + 1$ (moreover, for systems
      that are non-degenerate at the origin, they obtained the general polynomial bound $(t-1)^n$).

It is therefore natural to ask whether there is a systematic connection between upper bounds on the number of positive real solutions and upper bounds on the 
intersection multiplicity of a sparse polynomial system. This is precisely \textbf{Question 1.1} posed by Bihan, Dickenstein, and Forsgård in \cite{bihan2025sparsesystemshighlocal}.

Consequently, for both problems, the first fundamental open frontier arises in dimension two and codimension two ($n=2, m=2$), namely for systems of two 
bivariate polynomials whose combined support consists of at most $t=5$ monomials. Finding a sharp upper bound on either the number of positive solutions 
or the intersection multiplicity for general systems of this type remains an open problem. A natural and fundamental subcase consists in considering a system 
in two variables formed by a trinomial and a polynomial with $N > 1$ monomials (and in particular, two trinomials when $N=3$). Indeed, for two trinomials, 
up to dividing each equation by one of its monomials, we may assume without loss of generality that both contain a nonzero constant term; their combined support
 then involves at most $5$ monomials, making it a genuine subcase of the dimension $2$ and codimension $2$ problem.

For the number of isolated positive solutions of a bivariate system consisting of a trinomial and an $N$-nomial, Li, Rojas, and Wang \cite{Li_Rojas_Wang_2003}
 established the upper bound $2^N - 2$. More recently, this bound was significantly improved to a cubic polynomial in $N$ by El-Hilany, Koiran, and Tavenas
  \cite{elhilany2020improved} via Wronskians and \emph{dessins d'enfants}. In the special case of two trinomials ($N=3$), the result of Li, Rojas, and Wang 
  (see also \cite{sottile_real_2011}) shows that such a system has at most $5$ isolated positive solutions. This bound of $5$ is known to be sharp, as previously
   demonstrated by Haas \cite{haas2002simple} with the explicit system
\begin{equation}\label{eq:system de haas}
    10x^{106}+11y^{53}-11y = 10y^{106}+11x^{53}-11x = 0,
\end{equation}
which possesses exactly five positive solutions.

In this paper, we establish the counterpart to these results for intersection multiplicities, obtaining a bound that is \emph{quadratic} in $N$. We consider a
 system $f(x, y) = g(x, y) = 0$, where $f, g \in \Cc[x, y]$ are polynomials with complex coefficients such that $f$ is a trinomial and $g$ has $N > 1$ monomials.
 Our main result is as follows:

\begin{thm:associativity}
   Let $f\in \Cc[x,y]$ be a trinomial and $g\in \Cc[x,y]$ be a polynomial with $N\geq 1$ monomials.
   If $q\in \CC{2}$ is an isolated solution of $f=g=0$, then 
   
    \begin{align*}
        \M{q}(f,g) \leq \frac{(N-1)(N+2)}{2}.
    \end{align*}

    Furthermore, for $N=2$ or $3$, this bound is sharp.
\end{thm:associativity}

Restricting to the algebraic torus $\CC{2}$ is strictly necessary, as roots lying on the coordinate axes can have arbitrarily high multiplicities independently of $N$.
 Note also the parallel with the positive root counts: whereas the best known general bound for positive solutions of a trinomial and an $N$-nomial is cubic in $N$
  \cite{elhilany2020improved}, our upper bound for the intersection multiplicity is quadratic in $N$.

Setting $N=3$, \autoref{them:main} yields a maximal intersection multiplicity of $5$ for a system of two trinomials, matching the sharp bound of Li, Rojas, 
and Wang for positive solutions. Moreover, we show that this upper bound of $5$ is sharp, as achieved at $q = (1,1)$ by the system
\begin{equation}\label{eqopt5}
    2x^9+y^9-3 = x^{16}+2xy^6-3y = 0. 
\end{equation}
This sharp bound for two trinomials provides a strong affirmative insight into \textbf{Question 1.1}.

Furthermore, in the 1970s, Anatoli Kouchnirenko conjectured that a system of $n$ polynomial equations in $n$ variables with $m_1, \dots, m_n$ monomials, 
respectively, has at most $\prod_{i=1}^n (m_i - 1)$ strictly positive solutions (a conjecture refuted for $m_1=m_2=3$ by Haas via the system \eqref{eq:system de haas}).
 The analogous question for intersection multiplicities in the algebraic torus had remained open until now. System \eqref{eqopt5} demonstrates that 
 $\prod_{i=1}^n (m_i - 1)$ fails to be a valid upper bound for the intersection multiplicity as well, while \autoref{them:main} establishes that for two trinomials,
  the sharp bound is indeed $5$.

\newpage


\section{Some basic concepts}

\subsection{Multiplicity and invariants}

We denote the formal power series ring by $\Cc[[x_1,\dots , x_n]]=\Cc[[\bar{x}]]$.

  \begin{definition}[Multiplicity]\label{def:mult}
      Let $f_1,\dots,f_n $ analytics near $p$ for $p\in \Cc^n$.
    
    The multiplicity at $p$ of the system $f_1=\dots=f_n=0$ is the dimension over $\Cc$ of the quotient of $ \Cc[[\bar{x}-p]]$ by the ideal $(f_1,\dots , f_n)$, that is:
      
        $$
            \M p (f_1,\dots , f_n)= \dim_{\Cc} \left( \frac{ \Cc[[\bar{x}-p]]}{(f_1,\dots , f_n)}\right).
        $$
    \end{definition}

    When $n=2$, we recall some properties of the multiplicity of intersection of two polynomials.

  Multiplicity is invariant under translation:
\begin{align}
    \M {(0,0)} (f_1,f_2) = \M {(p_1,p_2)} (f_1(x-p_1,y-p_2),f_2(x-p_1,y-p_2)) \label{eq:cgmtline}
\end{align}
where $(p_1,p_2)\in \Cc^2 $. Moreover, if $p_1,p_2\not = 0$,
\begin{align}
     \M {(1,1)} (f_1,f_2) = \M {(p_1,p_2)} (f_1(x/p_1,y/p_2),f_2(x/p_1,y/p_2)). \label{eq:cgmttore}
\end{align}

Note that the change of coordinate \eqref{eq:cgmttore}, preserves the supports whereas \eqref{eq:cgmtline} does not.

We also recall that multiplicity is invariant under monomial changes of coordinates

\begin{theorem}[monomial change of coordinates]\label{chgmmonomial}
    Let $$\phi_A : x=(x_1,\dots,x_n) \mapsto (x^{a_1},\dots,x^{a_n})$$ where $A=\begin{pmatrix}
        | &  & | \\
        a_1 &\cdots & a_n\\
        | & & |
    \end{pmatrix} \in M_n(\mathbb{Z})$.
    
    If $\det(A)\neq 0$, then for any $p\in \CC{2}$ and any $q \in \phi_A^{-1}(p)$,
    $$
        \M p (f_1,\dots,f_n) = \M {\phi^{-1}_A(p)} (\phi_A^* f_1,\dots ,\phi_A^* f_n)
    $$
    where $\phi^*_A f(x) = f(\phi_A (x))$ denotes the pullback.
\end{theorem}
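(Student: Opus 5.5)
The plan is to reduce to a statement about the local algebra at one point. The map $\phi_A$ restricted to the torus $\CC{n}$ is a finite unramified covering of degree $|\det A|$. Its pullback identifies the completed local rings $\Cc[[\bar{x}-p]]$ and $\Cc[[\bar{x}-q]]$ for any $q\in\phi_A^{-1}(p)$. Once this identification is established, it sends the ideal $(f_1,\dots,f_n)$ onto $(\phi_A^*f_1,\dots,\phi_A^*f_n)$, and the two quotients have the same dimension over $\Cc$, which is the equality of \autoref{def:mult}.

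The steps, in order, are as follows.

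\textbf{Step 1.} First note that $\phi_A$ is holomorphic near $q$, because all coordinates $q_i$ are nonzero. This makes $x^{a_j}$ analytic for negative exponents as well, and $\phi_A(q)=p$ lies in $\CC{n}$.

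\textbf{Step 2.} Compute the Jacobian at $q$. Writing $a_j=(a_{1j},\dots,a_{nj})$, we have $\partial_i(x^{a_j}) = a_{ij}x^{a_j}/x_i$. Hence
\begin{equation*}
  D\phi_A(q)=\mathrm{diag}(p_1,\dots,p_n)\,A^{T}\,\mathrm{diag}(q_1^{-1},\dots,q_n^{-1}),
\end{equation*}
which is invertible because $\det A\neq 0$.

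\textbf{Step 3.} By the holomorphic inverse function theorem, $\phi_A$ restricts to a biholomorphism from a neighborhood $U$ of $q$ onto a neighborhood $V$ of $p$. Its inverse $\psi$ is given by convergent power series in $\bar{x}-p$.

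\textbf{Step 4.} Composition with $\phi_A$ and with $\psi$ gives mutually inverse $\Cc$-algebra homomorphisms between $\Cc\{\bar{x}-p\}$ and $\Cc\{\bar{x}-q\}$. They preserve the maximal ideals, since they map centres to centres. They are therefore continuous for the adic topologies, and so extend to mutually inverse isomorphisms $\Phi:\Cc[[\bar{x}-p]]\to\Cc[[\bar{x}-q]]$.

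\textbf{Step 5.} A ring isomorphism maps the ideal generated by $f_1,\dots,f_n$ to the ideal generated by their images $\Phi(f_j)=\phi_A^*f_j$. It therefore induces an isomorphism of the quotients, and the two dimensions agree. If one quotient is infinite-dimensional, so is the other, so non-isolated solutions correspond as well.

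The only delicate point is Step 4, namely that substitution of power series is well defined at the formal level. A cleaner route is to avoid formal substitution and use the standard fact that, for an ideal $I$ in the convergent ring $\Cc\{\bar{x}-p\}$ of finite colength, $\Cc\{\bar{x}-p\}/I \cong \Cc[[\bar{x}-p]]/I\,\Cc[[\bar{x}-p]]$. Since the $f_j$ are analytic, the multiplicity can be computed in the analytic local ring. There, biholomorphic invariance is immediate, because composition with a biholomorphism preserving the base points is a ring isomorphism. The case of infinite colength corresponds on both sides to $q$, respectively $p$, being non-isolated, by flatness of completion. This is a routine point rather than a genuine obstacle.
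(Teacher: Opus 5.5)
Your proposal is correct and follows essentially the paper's argument: the inverse function theorem makes $\phi_A$ a local biholomorphism, and the pullback is then an isomorphism of complete local rings carrying $(f_1,\dots,f_n)$ onto $(\phi_A^*f_1,\dots,\phi_A^*f_n)$. The differences are minor: you compute $D\phi_A(q)=\mathrm{diag}(p)\,A^{T}\,\mathrm{diag}(q^{-1})$ directly at an arbitrary $q$, whereas the paper first treats $p=q=\mathbf{1}$ and reduces the general case to it via the torus rescalings $x\mapsto px$, $x\mapsto x/q$ and the identity $\phi_A(x/q)=p^{-1}\phi_A(x)$; you are also more explicit than the paper about why the substitution passes from convergent to formal power series.
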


\begin{proof}
    First, suppose $p = q = (1, \dots ,1) = \mathbf{1}$. Up to transposition, we have 
    $$
        \text{Jac}(\phi_A)(1,\dots,1) = A.
    $$
    Since $\det(A)\neq 0$, by the holomorphic inverse function theorem, $\phi_A$ is
     a local biholomorphism in a neighborhood of $\mathbf{1}$. Therefore, the pullback $\phi_A^*$
     
     induces a $\mathbb{C}$-algebra automorphism of the formal power series ring $\Cc[[\bar{x}-\mathbf{1}]]$.

    Since the image of the ideal $(f_1,\dots,f_n)$ under $\phi_A^{*}$ is the ideal $(\phi_A^{*}f_1,\dots,\phi_A^{*}f_n)$, $\phi_A^*$ induces an isomorphism between the quotient $\mathbb{C}$-vector spaces:
    $$
        \Cc[[\bar{x}-\mathbf{1}]]/(f_1,\dots,f_n) \simeq \Cc[[\bar{x}-\mathbf{1}]]/(\phi_A^*f_1,\dots,\phi_A^*f_n).
    $$
    Taking dimensions over $\mathbb{C}$ shows that
    $$
        \Mun (f_1,\dots,f_n) = \Mun (\phi_A^*f_1,\dots,\phi_A^*f_n).
    $$
    For an arbitrary $p \in (\mathbb{C}^*)^n$ and $q \in \phi_A^{-1}(p)$ (so that $\phi_A(q) = p$), denoting component-wise multiplication and division in $(\mathbb{C}^*)^n$ by $px$ and $x/q$, we have:
    \begin{align*}
        \M p (f_1,\dots ,f_n) &= \Mun (f_1(px),\dots ,f_n(px))\\
        &= \Mun (f_1(p\phi_A(x)),\dots ,f_n(p\phi_A(x))) \\
        &= \M q \left(f_1\left(p\phi_A\left(\frac{x}{q}\right)\right),\dots ,f_n\left(p\phi_A\left(\frac{x}{q}\right)\right)\right)\\
        &= \M q \left(f_1\left(p\phi_A(q^{-1})\phi_A(x)\right),\dots ,f_n\left(p\phi_A(q^{-1})\phi_A(x)\right)\right)\\
        &= \M q (\phi_A^* f_1,\dots ,\phi_A^* f_n),
    \end{align*}
    where we used the multiplicative property $\phi_A(x/q) = \phi_A(q)^{-1}\phi_A(x) = p^{-1}\phi_A(x)$.
\end{proof}

\subsection{Puiseux series}\label{sec:Seriedepuiseux}
We briefly introduce the field of Puiseux series (see~\cite{puiseux} for more details).
\begin{definition}
    A Puiseux series is a formal series of the form
    \begin{align*}
        S(x) & = \sum_{i=1}^{+\infty} a_i x^{\frac{p_i}{q}}.
    \end{align*}
   where the coefficients $a_i$ are complex and the exponents $p_i/q$ are rational with a common denominator $q$.
\end{definition}

The set of Puiseux series forms a field with ordinary addition and multiplication. This field is algebraically closed (see \cite{walker1978algebraic}, Chapter IV, 3.2).

We will denote by $\Pp{x}$ the field of Puiseux series. For Puiseux series, we will use the formal derivative defined as follows:

\begin{align*}
   \frac{ \partial S}{ \partial x}(x) := \sum_{i=1}^{+\infty} a_i\frac{p_i}{q} x^{\frac{p_i}{q}-1}
\end{align*}
which we will sometimes denote $S'(x)$. In particular, the formal derivative of a Puiseux series is still a Puiseux series.

The usual rules for diferentiating sums and products hold :
\begin{align*}
     \frac{ \partial (S+T)}{ \partial x}(x) &=  \frac{ \partial S}{ \partial x}(x)+ \frac{ \partial T}{ \partial x}(x)\\
     \frac{ \partial (ST)}{ \partial x}(x) &= \frac{ \partial S}{ \partial x}(x)T(x)+\frac{ \partial T}{ \partial x}(x)S(x)
\end{align*}
for $S$ and $T$ in $\Pp x$.

We define a multiplicity (order) of a Puiseux series as follows.

\begin{definition}
Let $S\in \Pp{x}$, the multiplicity of $S$ at $0$
\begin{align*}
     \M{0} (S(x)) = \frac{\min_{ i\in \Nn^*} ( p_i \; | \: a_i \not =0)}{q}.
\end{align*}
For $p\in \Cc$, and $S\in \Pp{x-p}$,
    \begin{align*}
     \M{p} (S(x)) = \M{0}(S(x+p)).
\end{align*}
    We use the convention $\M{p} (0)=+\infty $.
\end{definition}

This multiplicity satisfies the usual properties of a valuation, that is, for $p\in \Cc$,

\begin{align}
    \M{p} (S(x)+T(x))&\geq \min (\M{p} (S(x)),\M{p} (T(x)))\label{eq:inesommevaluation}\\
    \M{p} (S(x)T(x)) &= \M{p} (S(x))+\M{p} (T(x)) \label{eq:valsom}
\end{align}
for $S$ and $T$ in  $\Pp x$.

Moreover,
\begin{align}
    \M{p} \left(\frac{\partial S}{\partial x} (x)\right) \geq \M{p} \left(S(x)\right) -1. \label{eq:valderival}
\end{align}
with equality if, and only if, $S$ has non zero valuation.

\subsection{Wronskian and toric operator}
We now introduce the main technical tool. 
Bounding the valuation of a sum of series from above is difficult, wereas the valuation of product is simply the sum of the valuation.

One way to go from a sum to a product is to use the Wronskian. We will also use the toric operator.
    
    \begin{align*}
        \Theta_P = P\frac{\partial}{\partial x}
    \end{align*}
    where $P\in \Cc [x]$.

The toric operator satisfies the same inequality as \eqref{eq:valderival}.

\begin{proposition}\label{prop:inegalité derivé toric}
    Let $f\in \Pp{x}$, $P\in \Cc[x]$, and $q\in \Cc$ such that $P(q)\not=0$.

 We have
\begin{align}
    \M{q} \left(\Theta_P (f) \right) \geq \M{q} \left(f\right) -1.\label{eq:inegalité derivé toric}
\end{align}
with equality if, and only if, $\M{q}(f)\not = 0$.

\end{proposition}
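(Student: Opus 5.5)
The plan is to reduce the statement to the known inequality \eqref{eq:valderival} for the formal derivative, using the fact that $\Theta_P$ differs from $\frac{\partial}{\partial x}$ only by multiplication by $P$, and that $P$ is a unit at $q$. By definition,
\begin{align*}
    \Theta_P(f) = P\,\frac{\partial f}{\partial x}.
\end{align*}
The multiplicativity property \eqref{eq:valsom} then gives
\begin{align*}
    \M{q}\left(\Theta_P(f)\right) = \M{q}(P) + \M{q}\left(\frac{\partial f}{\partial x}\right).
\end{align*}
Here $P$ is viewed as a Puiseux series in $x-q$; it is in fact a polynomial in $x-q$.

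The first step is to compute $\M{q}(P)$. Expanding $P(x+q)$ as a polynomial in $x$, its constant term is $P(q)\neq 0$, so $\M{q}(P)=\M{0}(P(x+q))=0$. Consequently
\begin{align*}
    \M{q}\left(\Theta_P(f)\right)=\M{q}\left(\frac{\partial f}{\partial x}\right).
\end{align*}

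The second step is to invoke \eqref{eq:valderival} at the point $q$. This yields $\M{q}(\partial f/\partial x)\geq \M{q}(f)-1$, with equality exactly when the valuation of $f$ at $q$ is nonzero, which is \eqref{eq:inegalité derivé toric} with its equality case.

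The only point needing care is that \eqref{eq:valderival} is stated for series in $x$ at $0$, whereas here it is used at a general $q$. I would check that translation commutes with the formal derivative, that is, $\frac{\partial}{\partial x}\bigl(f(x+q)\bigr)=f'(x+q)$ for $f\in\Pp{x-q}$. This holds term by term on the expansion $\sum a_i (x-q)^{p_i/q'}$, where $q'$ denotes the common denominator of the exponents, so the inequality transfers directly.

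For the equality case, write the leading term as $a x^{r}$ with $r=\M{0}(f(x+q))$. Its derivative is $a r x^{r-1}$, which is nonzero precisely when $r\neq 0$, so equality holds if and only if $r\neq 0$. If $r=0$, the derivative of the constant term vanishes and the valuation of the derivative is at least $\M{q}(f)$; it could even be $+\infty$, but the inequality remains valid. The convention $\M{q}(0)=+\infty$ handles $f=0$.

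I expect no real obstacle beyond this bookkeeping with the translation and the equality case.
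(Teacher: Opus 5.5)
Your proof is correct and is essentially the paper's argument: multiplicativity \eqref{eq:valsom} together with $\M{q}(P)=0$ (from $P(q)\neq 0$) reduces everything to \eqref{eq:valderival}. One small slip in your side remark: when $\M{q}(f)=0$, the valuation of $f'$ is strictly greater than $-1$ but need not be $\geq \M{q}(f)=0$ for Puiseux series (e.g.\ $f=1+(x-q)^{1/2}$ gives $\M{q}(f')=-1/2$); only the strict inequality is needed for the equality case, so the proof is unaffected.
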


\begin{proof}
    We can see that since $P(q)\not=0$, then $\M{q}(P)=0$. We optain     
    \begin{align*}
        \M{q} (\Theta_P(f)) &  = \M{q}\left(P\frac{\partial f}{\partial x}\right)\\
         & = \M{q}(P)+\M{q}\left(\frac{\partial f}{\partial x}\right)\\
         & \geq \M{q}(f) -1,
    \end{align*}
using \eqref{eq:valsom} and \eqref{eq:valderival}.
The equality case is directly given by \eqref{eq:valderival}.

\end{proof}

The toric Wronskien $\Theta_P$ is for $P\in \Cc[x]$,
  \begin{align*}
    W _P(f_1,\dots, f_k) = \det \left(\Theta_P ^{(i-1)} (f_j)\right)_{(i,j)\in \llbracket 1,k\rrbracket ^2},
  \end{align*}

  where $f_1,\dots , f_k$ are ($k-1$)-times differentiable functions.

    The following lemma allows us to extend to the toric Wronskian several results on the usual Wronskian.

\begin{lemme}\label{lemme : formule lian wronskien}
   Let $P\in \Cc[x]$. Then
    $$
         W _P(f_1,\dots, f_k)=P^{\frac{k(k-1)}{2}} \W (f_1,\dots, f_k)
    $$
 where $f_1,\dots , f_k$ are ($k-1$)-times differentiable functions.
\end{lemme}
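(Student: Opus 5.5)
We prove the identity $W_P(f_1,\dots,f_k)=P^{k(k-1)/2}\W(f_1,\dots,f_k)$ by row reduction of the matrix $\bigl(\Theta_P^{(i-1)}(f_j)\bigr)$, writing $D=\frac{\partial}{\partial x}$. The first step is to expand $\Theta_P^{(m)}$ as a differential operator with polynomial coefficients. By induction on $m$, there are polynomials $c_{m,l}\in\Cc[x]$, $0\le l<m$, with
\begin{align*}
    \Theta_P^{(m)} = P^m D^m + \sum_{l=0}^{m-1} c_{m,l}\, D^l .
\end{align*}
The case $m=0$ is trivial. For the inductive step we apply $\Theta_P = PD$ to the expression for $\Theta_P^{(m)}$ and use the Leibniz rule:
\begin{align*}
    PD\bigl(P^mD^m f\bigr)=P^{m+1}D^{m+1}f + mP^{m}P'D^m f .
\end{align*}
The first term is the new leading term. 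The second term, and each $PD(c_{m,l}D^l f)$ with $l<m$, involve only $D^l f$ with $l\le m$ and polynomial coefficients. So the leading coefficient $P^{m+1}$ propagates, and all lower-order terms stay in the span of lower derivatives.

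Next we read this identity row by row. Row $i$ of the matrix defining $W_P$ is the vector $\bigl(\Theta_P^{(i-1)}(f_j)\bigr)_j$. By the expansion, it equals $P^{i-1}$ times the row $\bigl(D^{i-1}f_j\bigr)_j$, plus a linear combination, with coefficients independent of $j$, of the rows $\bigl(D^{l}f_j\bigr)_j$ for $l<i-1$.

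Then we triangularize. The matrix of $W_P$ equals $T\cdot M$, where $M=\bigl(D^{i-1}f_j\bigr)$ is the ordinary Wronskian matrix and $T$ is a lower-triangular $k\times k$ matrix with entries in $\Cc[x]$ and diagonal $1,P,P^2,\dots,P^{k-1}$. Taking determinants gives
\begin{align*}
    W_P(f_1,\dots,f_k)=\det(T)\,\W(f_1,\dots,f_k)=P^{0+1+\cdots+(k-1)}\W(f_1,\dots,f_k)=P^{\frac{k(k-1)}{2}}\W(f_1,\dots,f_k).
\end{align*}
Equivalently, one can subtract multiples of earlier rows and then factor $P^{i-1}$ out of row $i$.

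The only point needing care is the inductive expansion. There we must check that differentiating the coefficient $P^m$ produces only terms of order at most $m$, so that they cannot contribute to the leading coefficient. The argument is purely formal, since it uses only linearity and the Leibniz rule. It therefore applies verbatim to Puiseux series, using the differentiation rules recalled in \autoref{sec:Seriedepuiseux}, as well as to $(k-1)$-times differentiable functions.
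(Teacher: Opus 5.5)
Your proof is correct and follows the paper's argument. The paper likewise shows by induction that $\Theta_P^{(k)}(f)=\sum_{i=0}^{k} P_i f^{(i)}$ with leading coefficient $P_k=P^k$, and then concludes by row operations; your factorization of the toric Wronskian matrix as $T\cdot M$, with $T$ lower triangular with diagonal $1,P,\dots,P^{k-1}$, is a clean way of writing those row operations that never requires dividing by $P$.
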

    \begin{proof}
        An easy induction shows that
    \begin{align}\label{eq:egalitétoric}
         \Theta_P ^ {(k)}(f) = \sum_{i=0}^{k} P_i f^{(i)},
    \end{align}
    where the $P_i$ are $\mathcal{C}^{\infty}$ functions and $P_k = P^k$. We deduce by row operations that

    \begin{align*}
     \left|\begin{matrix}
            f_1 & \cdots & f_k \\
             \Theta_P(f_1)  &\cdots  &    \Theta_P(f_k)\\
             \vdots & & \vdots \\
              \Theta_P ^{(k-1)}(f_1) & \cdots  &  \Theta_P ^{(k-1)}(f_k)
        \end{matrix}\right| = 
         \left|\begin{matrix}
            f_1 & \cdots & f_k \\
             Pf_1^{(1)}  &\cdots  &    Pf_k^{(1)}\\
             \vdots & & \vdots \\
              P ^{k-1}f_1^{(k-1)} & \cdots  &  P ^{k-1}f_k^{(k-1)}
        \end{matrix}\right|    
    \end{align*} 
    hence the result.
    \end{proof}

This gives a toric analogue of a classical result on the Wronskian.

\begin{proposition}\label{prop : dep leineaire et wronskien}
   If $f_1,\dots , f_k$ are analytic functions on a connected open set, then the family $(f_i)$ is linearly dependent and $P\not \equiv0$ if and only if $W_P(f_1,\dots , f_k)\equiv 0$.
\end{proposition}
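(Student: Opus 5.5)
The plan is to reduce everything to the classical statement for the ordinary Wronskian, using \autoref{lemme : formule lian wronskien}. By that lemma,
$$
W_P(f_1,\dots,f_k)=P^{\frac{k(k-1)}{2}}\,\W(f_1,\dots,f_k)
$$
on the connected open set $U$. Since $U$ is connected and $P$ is a polynomial, $P^{k(k-1)/2}$ vanishes identically on $U$ if and only if $P\equiv 0$ (for $k\geq 2$). It therefore suffices to show that $\W(f_1,\dots,f_k)\equiv 0$ on $U$ if and only if the $f_i$ are linearly dependent. I read the statement as: for $P\not\equiv 0$, the family is linearly dependent iff $W_P\equiv 0$.

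\emph{Direct implication.} Suppose $\sum c_j f_j=0$ with $(c_j)\neq 0$. Applying $\Theta_P^{(i-1)}$, which is $\Cc$-linear, gives $\sum_j c_j\Theta_P^{(i-1)}(f_j)=0$ for every $i$. Hence the columns of the matrix defining $W_P$ are linearly dependent at each point, and $W_P\equiv 0$. This holds with no assumption on $P$.

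\emph{Converse.} Assume $P\not\equiv 0$ and $W_P\equiv 0$. The zeros of $P$ in $U$ are finitely many on any compact subset, so $P^{k(k-1)/2}\neq 0$ on a dense open subset of $U$, and the product formula forces $\W(f_1,\dots,f_k)\equiv 0$ there, hence on all of $U$ by continuity. Now I argue by induction on $k$; the case $k=1$ is trivial, since $\W(f_1)=f_1$.

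For the inductive step, if $\W(f_1,\dots,f_{k-1})\equiv 0$, induction gives dependence of the subfamily, hence of the whole family. Otherwise choose a disc $D\subset U$ on which $w:=\W(f_1,\dots,f_{k-1})$ never vanishes. Expanding the vanishing determinant along its last column shows that on $D$ every $f_j$ satisfies the linear ODE
$$
\W(f_1,\dots,f_{k-1},y)=0,
$$
which has order $k-1$, leading coefficient $\pm w\neq 0$, and analytic coefficients. Its solution space on $D$ has dimension $k-1$ and contains $f_1,\dots,f_{k-1}$, which are independent on $D$ because $w\neq 0$. Thus $f_k=\sum_{j<k} c_j f_j$ on $D$. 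By the identity theorem on the connected set $U$, this relation holds on all of $U$, giving linear dependence.

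The main obstacle is the converse. It rests on the linear-ODE argument, which requires analyticity and connectedness: the classical counterexample $x^2, x|x|$ shows that the Wronskian can vanish identically for independent merely smooth functions. The only point specific to the toric setting is dividing out $P^{k(k-1)/2}$ away from its isolated zeros.
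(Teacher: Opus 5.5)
Your proof is correct and follows the paper's route. The paper gives no separate argument: it presents the statement as an immediate consequence of \autoref{lemme : formule lian wronskien} together with the classical Wronskian criterion for analytic functions, and you make exactly that reduction, then also prove the classical criterion via the linear-ODE uniqueness argument. Your reading of the statement (fix $P\not\equiv 0$; then the family is dependent if and only if $W_P\equiv 0$) is the right one, since taken literally the equivalence fails when $P\equiv 0$.
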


\autoref{lemme : formule lian wronskien} implies formulas \eqref{eq:wronskien formule 1} and \eqref{eq:wronskien formule 2} below.
\begin{proposition}\label{formulewronskien}
   Let $P\in \Cc[x]$, $g,f_1,\dots,f_k$ be real-valued functions $k-1$ differentiable on an interval $I$, and let $g$ be non-zero on $I$. Then, on $I$,
we have
    \begin{align}
        W_P(gf_1,\dots , gf_k) = (g)^k W_P(f_1,\dots , f_k).\label{eq:wronskien formule 1}
    \end{align}
  Furthermore, if $f_1$ is not zero on $I$, then on $I$,
we have
    \begin{align}
        W_P(f_1,\dots , f_k) = (f_1)^k W_P \left(\Theta_P\left(\frac{f_2}{f_1}\right),\dots , \Theta_P\left(\frac{f_k}{f_1}\right)\right).\label{eq:wronskien formule 2}    
    \end{align}
    
\end{proposition}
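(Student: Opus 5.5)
The plan is to derive both identities from \autoref{lemme : formule lian wronskien}. Write $W_P = P^{k(k-1)/2}\,\W$, prove the corresponding identities for the ordinary Wronskian $\W$, and multiply back by $P^{k(k-1)/2}$. Two points need care. First, the prefactor is the same on both sides of \eqref{eq:wronskien formule 1}, so that identity transfers immediately. Second, in \eqref{eq:wronskien formule 2} the right-hand side is a toric Wronskian of $k-1$ functions, which carries the factor $P^{(k-1)(k-2)/2}$. The functions inside it are $\Theta_P(f_j/f_1)=P\,(f_j/f_1)'$, so we use \eqref{eq:wronskien formule 1} with $g=P$ (on the subset where $P\neq 0$, then extend by continuity, or simply note that \eqref{eq:wronskien formule 1} is a polynomial identity in derivatives). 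This extracts a further factor $P^{k-1}$. Since $(k-1)(k-2)/2+(k-1)=k(k-1)/2$, the powers of $P$ match exactly, and \eqref{eq:wronskien formule 2} reduces to the classical identity $\W(f_1,\dots,f_k)=f_1^k\,\W\big((f_2/f_1)',\dots,(f_k/f_1)'\big)$.

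It remains to prove the two classical identities for $\W$. For $\W(gf_1,\dots,gf_k)=g^k\W(f_1,\dots,f_k)$, expand $(gf_j)^{(i)}=\sum_{l\le i}\binom{i}{l}g^{(i-l)}f_j^{(l)}$ by Leibniz. Row $i$ of the matrix is then $g$ times row $i$ of the Wronskian matrix of the $f_j$, plus a combination, with coefficients independent of $j$, of the lower rows $f_j^{(l)}$, $l<i$. Working row by row from the top, where each lower row is already a multiple of the corresponding row for the $f_j$ plus still lower rows, a triangular change of rows with diagonal entries $g$ gives the determinant $g^k\W(f_1,\dots,f_k)$. This is an induction on the row index, and no division by $g$ is needed; the hypothesis $g\neq 0$ is only used later.

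For the second identity, apply the first with $g=f_1$ (which is nonzero on $I$) to the functions $1, f_2/f_1,\dots,f_k/f_1$. This gives $\W(f_1,\dots,f_k)=f_1^k\,\W(1,h_2,\dots,h_k)$ with $h_j=f_j/f_1$. The first column of $\W(1,h_2,\dots,h_k)$ is $(1,0,\dots,0)^T$, so expanding along it yields the $(k-1)\times(k-1)$ determinant $\det\big(h_j^{(i)}\big)_{1\le i\le k-1,\,2\le j\le k}=\W(h_2',\dots,h_k')$, as required.

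The main obstacle is the bookkeeping of the powers of $P$ in the toric version of \eqref{eq:wronskien formule 2}. One cannot simply substitute $\Theta_P$ for the derivative, because $\Theta_P^{(i)}\neq P^i\partial^i$. This is exactly why I would route everything through \autoref{lemme : formule lian wronskien} and the ordinary Wronskian rather than argue directly with $\Theta_P$. The exponent check $(k-1)(k-2)/2+(k-1)=k(k-1)/2$ is the crux; the remaining steps are routine determinant manipulations.
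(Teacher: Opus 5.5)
Your proposal is correct and follows the paper's route. The paper simply asserts that both identities follow from $W_P=P^{k(k-1)/2}\W$ (\autoref{lemme : formule lian wronskien}) combined with the classical Wronskian identities, and you supply the details it leaves implicit. In particular, you handle the key exponent bookkeeping: applying the classical identity with $g=P$ to the functions $(f_j/f_1)'$ gives $\frac{(k-1)(k-2)}{2}+(k-1)=\frac{k(k-1)}{2}$. You also show that the Leibniz rule makes the matrix for the $gf_j$ a lower-triangular matrix with diagonal $g$ times the matrix for the $f_j$, so no nonvanishing of $g$ (or of $P$) is needed and the continuity hedge is unnecessary.
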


The following proposition is adapted from \cite{Koiran_2020} (proposition 2.6) to the toric Wronskian.
\begin{proposition}\label{inegalitewronskien}
   Let $P\in \Cc[x]$ and $q\in \Cc$ such that $P(q)\not=0$.
   Let $S_1(x),\dots , S_n(x)$ be Puiseux series with nonnegative multiplicity at $q$. Then, we have the inequality
    \begin{align}
        \M{q} (S_1(x)+\cdots + S_n(x))\leq \frac{n(n-1)}{2} + \M{q} (W_P(S_1(x),\dots , S_n(x))).\label{eq:bornewronsk 1}
    \end{align}

    If moreover $\M{q} (\Theta_P^{(i-1)}(S_j))\geq 0 $ for all $i,j\in \llbracket 1,n \rrbracket$, then
    \begin{align}
        \M{q} (S_1(x)+\cdots + S_n(x))\leq n-1 + \M{q} (W_P(S_1(x),\dots , S_n(x))).\label{eq:bornewronsk 2}
    \end{align} 
\end{proposition}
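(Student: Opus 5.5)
We prove Proposition~\ref{inegalitewronskien} by induction on $n$, following the scheme of \cite{Koiran_2020} but with the toric operator $\Theta_P$ in place of $\frac{\partial}{\partial x}$. The two tools are formula \eqref{eq:wronskien formule 2}, which lowers the number of functions by one, and Proposition~\ref{prop:inegalité derivé toric}, which controls the loss of valuation under $\Theta_P$. Since $P(q)\neq 0$, $P$ is a unit at $q$, so multiplying by powers of $P$ never changes $\M{q}$. We may also assume the $S_j$ are linearly independent; otherwise $W_P\equiv 0$ and the right-hand side is $+\infty$. The case $n=1$ is trivial, since $W_P(S_1)=S_1$.

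\emph{Reduction to a minimal-valuation term.} Reorder so that $S_1$ has minimal valuation $v=\M{q}(S_1)\geq 0$. Write $S=\sum S_j=S_1\bigl(1+\sum_{j\geq 2} S_j/S_1\bigr)$ and set $T_j=S_j/S_1$, so $\M{q}(T_j)\geq 0$. Applying \eqref{eq:wronskien formule 2} formally in $\Pp{x}$ (the identity is algebraic, so it holds for Puiseux series) gives
\[
W_P(S_1,\dots,S_n)=S_1^{\,n}\,W_P\bigl(\Theta_P(T_2),\dots,\Theta_P(T_n)\bigr),
\]
hence $\M{q}(W_P(S))=nv+\M{q}\bigl(W_P(\Theta_P T_2,\dots,\Theta_P T_n)\bigr)$.

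\emph{Induction step.} Set $u=\M{q}\bigl(1+\sum_{j\ge2}T_j\bigr)$, so that $\M{q}(S)=v+u$. If $u=0$, the bound follows from $\M{q}(W_P(S))\geq nv\geq v$ once we know the smaller Wronskian has nonnegative valuation. If $u>0$, Proposition~\ref{prop:inegalité derivé toric} gives
\[
\M{q}\Bigl(\sum_{j\ge2}\Theta_P T_j\Bigr)=u-1 .
\]
The obstacle is that the $\Theta_P T_j$ may have negative valuation (down to $-1$), so the induction hypothesis does not apply directly. To fix this, multiply each $\Theta_P T_j$ by $(x-q)$, which makes every valuation $\geq 0$. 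By \eqref{eq:wronskien formule 1} the Wronskian is then multiplied by $(x-q)^{n-1}$, and the sum's valuation rises by $1$. Applying the induction hypothesis to the $n-1$ series $(x-q)\Theta_P T_j$ gives
\[
u \le \frac{(n-1)(n-2)}{2}+(n-1)+\M{q}\bigl(W_P(\Theta_P T_2,\dots,\Theta_P T_n)\bigr).
\]
Since $\frac{(n-1)(n-2)}{2}+(n-1)=\frac{n(n-1)}{2}$, adding $v\le nv$ yields \eqref{eq:bornewronsk 1}. The $u=0$ case needs nonnegativity of the smaller Wronskian, which is not automatic; the same $(x-q)$ rescaling and valuation bookkeeping settle it, or one checks that the inequality survives with the $-(n-1)$ shift.

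\emph{Second inequality.} Under the hypothesis $\M{q}(\Theta_P^{(i-1)}S_j)\geq 0$, we avoid the rescaling loss and argue linearly instead. Let $m=\M{q}(S)$. Each row $\Theta_P^{(i-1)}$ applied to $S$ has valuation $\geq m-(i-1)$ by Proposition~\ref{prop:inegalité derivé toric}. Replace the first column of the matrix $\bigl(\Theta_P^{(i-1)}S_j\bigr)$ by the sum of all columns; this does not change the determinant. The new first column $\bigl(\Theta_P^{(i-1)}S\bigr)_i$ has entries of valuation $\geq m-(n-1)$, and all other entries have valuation $\geq 0$ by hypothesis. Expanding along the first column, every term of the determinant has valuation $\geq m-n+1$, so $\M{q}(W_P)\geq m-(n-1)$, which is \eqref{eq:bornewronsk 2}.

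The same column-sum argument also gives \eqref{eq:bornewronsk 1} directly, provided each entry satisfies $\M{q}(\Theta_P^{(i-1)}S_j)\geq -(i-1)$, which holds by iterating Proposition~\ref{prop:inegalité derivé toric}. Then each term of the expansion has valuation $\geq m-(n-1)-\sum_{i\ge 2,\,i\neq i_0}(i-1)\geq m-\frac{n(n-1)}{2}$, where $i_0$ is the row of the first-column entry in that term. This gives a cleaner route to \eqref{eq:bornewronsk 1} that avoids the induction; we would present it as the main proof. The only delicate point is justifying the valuation inequalities for Puiseux series at $q$ rather than at $0$, which follows by translation.
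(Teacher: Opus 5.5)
Your final route, replacing the first column of $\bigl(\Theta_P^{(i-1)}S_j\bigr)$ by the sum of all columns and expanding along it, is exactly the paper's proof. The paper writes the same expansion via the adjugate as $\mathcal{W}_P=\sum_{i}\Theta_P^{(i-1)}(S)M_i$, with $M_i$ the cofactor of entry $(i,1)$, and your treatment of \eqref{eq:bornewronsk 2} matches it.

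There is one arithmetic slip in your displayed estimate for \eqref{eq:bornewronsk 1}. With the uniform bound $m-(n-1)$ on the first-column entry, a term passing through row $i_0$ is only bounded below by $m-(n-1)-\frac{n(n-1)}{2}+(i_0-1)$. This falls short of $m-\frac{n(n-1)}{2}$ unless $i_0=n$; for instance $n=2$, $i_0=1$ gives only $m-2$. You must instead use the row-dependent bound $\M{q}(\Theta_P^{(i_0-1)}S)\geq m-(i_0-1)$, which you stated a line earlier. Then the $(i_0-1)$ cancels exactly, as in the paper.

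The inductive sketch can be dropped, since the column-sum argument already suffices and its $u=0$ case is left unfinished.
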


\begin{proof}
   We first prove \eqref{eq:bornewronsk 1}. Set
    $$
        S=S_1+\dots+S_n
    $$
    and $M=\M{q} (S)$. 
    
  Consider the square linear system

    \begin{align}\label{eq:syst xronsk}
        \begin{pmatrix}
            S_1 & \cdots & S_n \\
            \vdots & \ddots & \vdots \\
            \Theta_P^{(n-1)} (S_1) &  \cdots & \Theta_P^{(n-1)} ( S_n)
        \end{pmatrix}
        \begin{pmatrix}
            1 \\
            \vdots \\
            1
        \end{pmatrix}
        =
        \begin{pmatrix}
             S \\
            \vdots \\
            \Theta_P^{(n-1)} (S) 
        \end{pmatrix}
    \end{align}

    where the equality follows frpù the linearity of $\Theta_P$.
   The determinant of the square matrix of \eqref{eq:syst xronsk} is exactly the toric Wronskian. Now, if the toric Wronskian is identically zero,
    \eqref{eq:bornewronsk 1} and
\eqref{eq:bornewronsk 2} hold trivially, since the right-hand sides are infinite. Otherwise, we define $\mathcal{W}_P(x)=W_P(S_1,\dots , S_n)$
 as the determinant of our linear transformation \eqref{eq:syst xronsk}
and we obtain
      \begin{align}\label{eq:syst xronsk121}
        \mathcal{W}_P(x)
        \begin{pmatrix}
            1 \\
            \vdots \\
            1
        \end{pmatrix}
        =
        \text{com}\left(\begin{pmatrix}
            S_1 & \cdots & S_n \\
            \vdots & \ddots & \vdots \\
            \Theta_P^{(n-1)} (S_1) &  \cdots & \Theta_P^{(n-1)} ( S_n)
        \end{pmatrix}\right)^T\begin{pmatrix}
             S \\
            \vdots \\
            \Theta_P^{(n-1)} (S) 
        \end{pmatrix}
    \end{align}
    where $\text{com}(M)^T$ is the adjugate of $M$.

 The first row of \eqref{eq:syst xronsk121} gives
    \begin{align*}
        \mathcal{W}_P(x) = \sum_{i=1}^{n} \Theta_P^{(i-1)} (S)M_i,
    \end{align*}
where $M_i$ is the cofactor of the entry $(i,1)$ of the matrix in \eqref{eq:syst xronsk}.
   
   By \eqref{eq:inegalité derivé toric}, for $i\in \Nn$,
    \begin{align}\label{eq:min sur dtS}
        \M{q}(\Theta_P^{(i)}(S)) \geq M-i,
    \end{align}

   and, for $i\in \Nn$ and  $j\in \llbracket 1,n \rrbracket$,
    \begin{align}\label{eq:mindupisj}
        \M{q}(\Theta_P^{(i)}(S_j)) \geq -i
    \end{align}
    by \eqref{eq:inegalité derivé toric}, since $\M{q}(S_j) \geq 0$ by hypothesis.

   This gives a lower bound for the multiplicity at $q$ of each $M_i$:
    \begin{align}
        \begin{split}
        \M{q} (M_i) &= \M{q} \left(\sum_{\substack{\sigma \in \mathfrak{S}_n \\ \sigma(1) = i}}\varepsilon(\sigma)\prod_{j=2}^{n}\Theta_P^{(\sigma(j)-1)}(S_{j})\right)\\
        & \geq \min_{\substack{\sigma \in \mathfrak{S}_n \\ \sigma(1) = i}} \left(\sum _{j=2}^{n} \M{q}\big(\Theta_P^{(\sigma(j)-1)}(S_{j})\big)\right)\\
        & \geq i-1-\sum_{j=1}^n (j-1) = i-1-\frac{n(n-1)}{2},
        \end{split}\label{eq:min sur mi}
    \end{align}
    Where the last inequality follows from \eqref{eq:mindupisj}.

    Comblining \eqref{eq:min sur mi} and \eqref{eq:min sur dtS}, we get, for all $i \in \llbracket 1,n \rrbracket$
    \begin{align}\label{jklm}
        \M{q} (  \Theta_P^{(i-1)} (S)M_i) & \geq i-1-\frac{n(n-1)}{2}+M-(i-1)=M-\frac{n(n-1)}{2}.
    \end{align}

    By \eqref{eq:inesommevaluation} and \eqref{jklm}, we get
    \begin{align}\label{eq:conclusion}
        \M{q} (\mathcal{W}_P(x)) & = \M{q} \left(\sum_{i=1}^{n} \Theta_P^{(i-1)} (S)M_i\right)\geq M-\frac{n(n-1)}{2},
    \end{align}
    wich is \eqref{eq:bornewronsk 1} after rearranging.

Under the additional hypothesis of \eqref{eq:bornewronsk 2}, every entry of the matrix has nonnegative multiplicity, so $\M{q} (M_i)  \geq 0$ instead of \eqref{eq:min sur mi},
 and \eqref{eq:conclusion} becomes

     \begin{align*}
        \M{q} (\mathcal{W}_P(x)) & = \M{q} \left(\sum_{i=1}^{n} \Theta_P^{(i-1)} (S)M_i\right)\geq\min_{1\leq i \leq n}( M-(i-1))\geq M-(n-1).
    \end{align*}
\end{proof}

\section{Case of a trinomial and a sparse polynomial}

We begin by simplifying our trinomial via a monomial change of coordinates. Write
\begin{align*}
    f&=1+a_0x^{\alpha_1}y^{\alpha_2}+b_0x^{\beta_1}y^{\beta_2}.
\end{align*}
Since we work in the torus, we may divide $f$ by one of its monomials and assume that it has a constant term.

If the Newton polytope of $f$ has dimension 1, \autoref{propsupportdedim1} bellow bounds

the multiplicity by $2(N-1)$, where $N$ is the number of monomials of $g$.

We may therefore assume that the Newton polytope of $f$ has dimension 2, i.e. $\alpha_1\beta_2-\beta_1\alpha_2\not=0$ and
the coefficients are non-zero.

This puts $f$ in the simplified form
$$
    A=\frac{1}{\alpha_1\beta_2-\beta_1\alpha_2}\begin{pmatrix}
        \beta_2 & -\beta_1 \\
        -\alpha_2 & \alpha_1
    \end{pmatrix}.
$$
This allows us to consider $f$ in the following simplified form

\begin{align*}
    f=b+ax-y.
\end{align*}

Thus, for any system formed by a trinomial with two-dimensional Newton polytope and a polynomial with $N$ monomials, there exist nonzero $a,b$ with rational exponents
and $N$ monomials such that the system $f=g=0$ (where $f=b+ax-y$) has the 
 same multiplicity as the original system.

We now bound the multiplicity of such systems. From now on, the system under study is
\begin{equation}\label{notations}
    \begin{split}
        f=&b+ax-y\\
        g=&g_1x^{a_{11}}y^{a_{21}}+\cdots + g_N x^{a_{1N}}y^{a_{2N}}
    \end{split}
\end{equation}
where $a_{ij}\in \Qq$, and $g_j\in \Cc^*$ for $i\in \llbracket 1,2\rrbracket $and $j \in  \llbracket 1,N\rrbracket $ the exponent vectors $(a_{1j},a_{2j})$ being pairwise distinct.

In what follows, we will assume that the multiplicity of intersection of these two curves is nonzero, and therefore that 
\begin{align}\label{eqaplusbest1}
a+b=1.
\end{align}

\begin{proposition}\label{propsupportdedim1}
    Let $q\in (\Cc^*)^2$ $f$ and $g$ be two polynomials with coefficients in $\Cc$, with $t_f$ and $t_g$ monomials respectively, and assume that the Newton polytope of $f$ 
    or of $g$ has dimension 1. 

Then, if $\M{q}(f,g)<+\infty$, we have the following bound for the local multiplicity.
    $$
        \M{q} (f,g)\leq (t_f-1)(t_g-1).
    $$
\end{proposition}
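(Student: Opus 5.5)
Assume without loss of generality that the Newton polytope of $f$ has dimension $1$; the case of $g$ is symmetric, since $\M{q}(f,g)=\M{q}(g,f)$. The plan is to use a monomial change of coordinates to turn $f$ into a univariate polynomial, and then reduce to Hajós' lemma.

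First, normalize $f$. Dividing $f$ by one of its monomials does not change the multiplicity at a point of the torus. After this, the support of $f$ contains $0$ and lies on a line through the origin. Write this support as $\{k_0v,\dots,k_{t_f-1}v\}$, where $v\in\mathbb{Z}^2$ is primitive and the $k_i$ are integers. Multiplying by a further monomial $(x^v)^{-\min k_i}$ makes all the $k_i$ nonnegative, with $\min k_i=0$. Complete $v$ to a basis $(v,w)$ of $\mathbb{Z}^2$, so the matrix $A$ with columns $v,w$ is unimodular. Applying \autoref{chgmmonomial} with this $A$, the pullback $\phi_A^*f$ becomes a univariate polynomial $F(u)=\sum_i c_iu^{k_i}$ with $t_f$ monomials and $F(0)\neq0$. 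The pullback $\phi_A^*g$ is a Laurent polynomial $G(u,w)$ with $t_g$ monomials, and after multiplying by a monomial it is an honest polynomial. The multiplicity at $q$ equals the multiplicity at the point $q'=\phi_A^{-1}(q)\in\CC{2}$. Write $q'=(u_0,w_0)$.

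Second, compute the local multiplicity. Near $q'$, factor $F(u)=(u-u_0)^m h(u)$ with $h(u_0)\neq0$. By Hajós' lemma, $m\le t_f-1$, since $u_0\neq0$. The factor $h$ is a unit in the local ring, so
$$\M{q'}(F,G)=m\cdot\M{w_0}\big(G(u_0,w)\big).$$
This uses the standard fact that for $F=(u-u_0)^m$ the quotient is $\Cc[[w-w_0]]/(G(u_0,w)^{\,})$ filtered in $m$ steps; equivalently, one can use additivity of intersection multiplicity in the first argument.

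Next, note that $G(u_0,w)$ is a univariate Laurent polynomial in $w$ with at most $t_g$ monomials. Different exponents of $G$ sharing the same $w$-exponent collapse into one term, which can only reduce the count. Since $\M{q}(f,g)<+\infty$, the polynomial $G(u_0,w)$ is not identically zero; otherwise $u-u_0$ would be a common factor and the solution would not be isolated. Because $w_0\neq0$, Hajós' lemma gives $\M{w_0}(G(u_0,w))\le t_g-1$. Combining the two bounds yields $\M{q}(f,g)\le(t_f-1)(t_g-1)$.

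The main obstacle is justifying the product formula $\M{q'}((u-u_0)^m,G)=m\,\M{w_0}(G(u_0,w))$. I would prove it by induction on $m$ using the exact sequence
$$0\to \mathcal{O}/((u-u_0)^{m-1},G)\xrightarrow{\cdot(u-u_0)}\mathcal{O}/((u-u_0)^m,G)\to\mathcal{O}/(u-u_0,G)\to0.$$
Injectivity of the first map requires that $u-u_0$ and $G$ have no common factor, and this holds by finiteness of the multiplicity. One must also take care of the collapse of monomials in $G(u_0,w)$, which is harmless since it only lowers the monomial count.
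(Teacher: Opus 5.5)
Your proof is correct and follows essentially the same route as the paper: a unimodular monomial change making $f$ univariate, Haj\'os' lemma to bound the order $m\le t_f-1$ of the root $u_0$, the product formula $\M{q'}(F,G)=m\,\M{w_0}(G(u_0,w))$ (which the paper only asserts and which your exact-sequence induction justifies), and Haj\'os' lemma again for $G(u_0,w)$. One small slip: with the paper's convention $\phi_A^*(x^m)=x^{Am}$, it is $A^{-1}$, not $A$, that should have columns $v,w$, so that $x^{kv}$ pulls back to $u^k$.
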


\begin{proof}

Using the coordinate change \eqref{eq:cgmttore}, we can assume that $q=(1,1)$.

    By symmetry, we may assume that the Newton polytope of $f$ is one-dimensional. 
    There exists a monomial change of coordinates $\Phi:\CC 2 \to \CC 2$ such that
    $$
        f\circ \Phi =f_0 
    $$
    where $f_0$ depends only on $x$. Since $f$ has $t_f$ monomials, the same is true for $f_0$ and we can write 
    $$
        f_0(x) = (1-x)^uf_1(x) 
    $$
    where $f_1$ is a Laurent polynomial such that $f_1(1)\not=0$ and $0\leq u\leq t_f-1$. We deduce
    \begin{align*}
        \Mun (f,g) &= \Mun (f\circ \Phi,g\circ \Phi)\\
         &= \Mun ((1-x)^uf_1(x) ,g\circ \Phi(x,y))\\
         &= u\Mun (1-x,g\circ \Phi(x,y))\\
         &= u \M{1} (g \circ \Phi (1,y))\\
          & \leq (t_f-1)(t_g-1).
    \end{align*} 

    Indeed, $ \M{1} (g \circ \Phi (1,y))$ is finite because $\Mun (f,g)$ is finite, 
    and by Haj\'os' lemma, $ \M{1} (g \circ \Phi (1,y))\leq t_g-1$ since $g \circ \Phi (1,y)$ is a Laurent polynomial in one variable with at most $t_g$ monomials.
\end{proof}

\begin{lemme}\label{lemmebornval}
   Let $a,b\in\Cc^*$ such that $a+b=1$ and $P=x(ax+b)$. Then 
    $$
        \M{1}(g(x,ax+b))\leq N-1+\M{1}(W _P(g_1x^{a_{11}}(ax+b)^{a_{21}},\cdots , g_N x^{a_{1N}}(ax+b)^{a_{2N}})).
    $$
\end{lemme}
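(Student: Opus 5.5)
The plan is to apply the second inequality \eqref{eq:bornewronsk 2} of \autoref{inegalitewronskien} at $q=1$, with $n=N$, the series $S_j(x)=g_jx^{a_{1j}}(ax+b)^{a_{2j}}$ and $P=x(ax+b)$. Since $a+b=1$, we have $P(1)=a+b=1\neq 0$, so the hypothesis on $P$ holds. Near $x=1$ both $x$ and $ax+b$ are units: they take the value $1$ at $x=1$. Hence each $S_j$ is a convergent power series in $x-1$, in particular a Puiseux series in $x-1$, with $\M{1}(S_j)=0\geq 0$. Moreover $S=S_1+\cdots+S_N=g(x,ax+b)$, so the left side of \eqref{eq:bornewronsk 2} is exactly $\M{1}(g(x,ax+b))$.

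The remaining hypothesis is $\M{1}(\Theta_P^{(i-1)}(S_j))\geq 0$ for all $i,j$. This is where the toric choice of $P$ pays off. Write $S_j=g_jx^{\alpha}y^{\beta}$ with $y=ax+b$, $\alpha=a_{1j}$, $\beta=a_{2j}$. Then
\begin{align*}
\Theta_P(S_j)=x(ax+b)\,\frac{\partial}{\partial x}\bigl(x^{\alpha}y^{\beta}\bigr)g_j=g_j\bigl(\alpha y+\beta a x\bigr)x^{\alpha}y^{\beta}.
\end{align*}
By induction, $\Theta_P^{(k)}(S_j)=Q_{k,j}(x)\,x^{\alpha}y^{\beta}$ with $Q_{k,j}\in\Cc[x]$ a polynomial. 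The induction step is that $\Theta_P$ of a product $Qx^\alpha y^\beta$ equals $\bigl(P Q'+Q(\alpha y+\beta a x)\bigr)x^\alpha y^\beta$, which again has polynomial prefactor. Since $x^\alpha y^\beta$ is a unit at $x=1$ and polynomials have nonnegative valuation, every entry has $\M{1}\geq 0$.

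Applying \eqref{eq:bornewronsk 2} then gives the claimed inequality directly. If the Wronskian vanishes identically, the right side is $+\infty$ and there is nothing to prove. I expect no real obstacle. The only point needing care is the stability of the form ``polynomial times $x^\alpha y^\beta$'' under $\Theta_P$, which is the short computation above. One must also interpret the rational powers as convergent power series at $x=1$ via a branch with value $1$, consistent with viewing them as Puiseux series in $x-1$.
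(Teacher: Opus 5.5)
Your proposal is correct and follows essentially the same route as the paper: apply \eqref{eq:bornewronsk 2} at $q=1$ with $S_j=g_jx^{a_{1j}}(ax+b)^{a_{2j}}$, using $P(1)=a+b=1\neq 0$ and the fact that every entry $\Theta_P^{(i-1)}(S_j)$ has nonnegative valuation at $1$. The only difference is that you check this last point through the explicit form ``polynomial times $x^{\alpha}(ax+b)^{\beta}$'', which the paper proves separately in the next lemma. The paper itself simply notes that these functions are power series in $x-1$ and that $\Theta_P$ preserves this.
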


\begin{proof}

   We apply part \eqref{eq:bornewronsk 2} of \autoref{inegalitewronskien} to 
    
    \begin{align*}
    \M{1} (g_1x^{a_{11}}(ax+b)^{a_{21}}+\cdots + g_N x^{a_{1N}}(ax+b)^{a_{2N}})  .  
    \end{align*}
    First note that $P(1)=1\not=0$ and that, for any $i\in \llbracket 1,N \rrbracket$, the function $x^{a_{1i}}(ax+b)^{a_{2i}}$
     can be expanded as a power series in a neighborhood of 1.

Hence, for all $i,j\in \llbracket 1,N \rrbracket$,
     $$\M{1} (\Theta_P^{(j-1)}(x^{a_{1i}}(ax+b)^{a_{2i}}))\geq 0. $$

    The result follows from \eqref{eq:bornewronsk 2}.

\end{proof}

\begin{lemme}\label{lemme : deriv toric xaxb}
    Let $u,v\in \Qq$ and $j\in \Nn$, we have for $a,b\in\Cc^*$ such that $a+b=1$ and $P=x(ax+b)$
    \begin{align*}
        \Theta_P ^{(j)}(x^{u}(ax+b)^{v})=x^{u}(ax+b)^{a_2}Q_j
    \end{align*}
    where $Q_j\in \Cc[x]$ is of degree at most $j$.
\end{lemme}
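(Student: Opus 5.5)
The plan is a direct induction on $j$, showing that $\Theta_P$ maps the set $x^{u}(ax+b)^{v}\cdot\Cc[x]_{\le j}$ into $x^{u}(ax+b)^{v}\cdot\Cc[x]_{\le j+1}$. Here $\Cc[x]_{\le j}$ denotes the polynomials of degree at most $j$. I read the exponent $a_2$ in the statement as the typo it seems to be for $v$, so the claim becomes $\Theta_P^{(j)}(x^u(ax+b)^v)=x^u(ax+b)^vQ_j$. The computation is carried out formally, or as analytic functions near $x=1$, where $x^u$ and $(ax+b)^v$ have convergent expansions. The hypothesis $a+b=1$ is not needed for the algebra; it only ensures $P(1)\neq 0$, which matters in \autoref{lemmebornval}.

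For the base case $j=0$ we take $Q_0=1$, which has degree $0$.

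For the inductive step, assume $\Theta_P^{(j)}(x^u(ax+b)^v)=x^u(ax+b)^vQ_j$ with $\deg Q_j\le j$. Apply $\Theta_P=x(ax+b)\frac{\partial}{\partial x}$ and use the product rule:
\begin{align*}
\Theta_P\big(x^u(ax+b)^vQ_j\big)
&= x(ax+b)\Big(u x^{u-1}(ax+b)^vQ_j + va\,x^u(ax+b)^{v-1}Q_j + x^u(ax+b)^vQ_j'\Big)\\
&= x^u(ax+b)^v\Big(u(ax+b)Q_j + va\,xQ_j + x(ax+b)Q_j'\Big).
\end{align*}
This suggests setting
\begin{align*}
Q_{j+1}=u(ax+b)Q_j+va\,xQ_j+x(ax+b)Q_j' .
\end{align*}
The three summands have degrees at most $1+j$, $1+j$ and $2+(j-1)=j+1$ respectively. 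When $Q_j$ is constant, $Q_j'=0$ and the last term vanishes. Hence $\deg Q_{j+1}\le j+1$, which closes the induction.

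There is no real obstacle here. The key point is that the factor $x(ax+b)$ in $P$ exactly cancels the negative powers $x^{u-1}$ and $(ax+b)^{v-1}$ produced by differentiation. This cancellation is why $P=x(ax+b)$ is the natural choice of toric operator: it keeps the prefactor $x^u(ax+b)^v$ fixed and raises the polynomial degree by at most one per application. One could also record the leading coefficient, $\mathrm{lc}(Q_{j+1})=(au+av+aj)\,\mathrm{lc}(Q_j)$, which may be useful later when these $Q_j$ are fed into the toric Wronskian of \autoref{lemmebornval}.
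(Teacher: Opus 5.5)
Your proof is correct and follows the paper's argument: induction on $j$ with the same recursion $Q_{j+1}=u(ax+b)Q_j+av\,xQ_j+x(ax+b)Q_j'$, whose summands each have degree at most $j+1$. You are also right to read the exponent $a_2$ in the statement as a typo for $v$.
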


\begin{proof}
    We proceed by induction on $j\in \Nn$. The case $j=0$, is trivial.

    Let $j_0 \geq 1$ and assume that the statement holds for $j_0-1$. 
    We have
    \begin{align*}
        \Theta_P^{(j_0)} (x^{u}(ax+b)^{v})& =\Theta_P (x^{u}(ax+b)^{v}Q_{j_0-1})\\
        & = x^{u}(ax+b)^{v}(u(ax+b)Q_{j_0-1}+avxQ_{j_0-1}+x(ax+b)Q_{j_0-1}')
    \end{align*}
    Since $(u(ax+b)Q_{j_0-1}+avxQ_{j_0-1}+x(ax+b)Q_{j_0-1}')$ has degree at most $\deg (Q_{j_0-1})+1$, this concludes the proof.
\end{proof}

\begin{theorem}\label{them:main}\label{thm:associativity}
   Let $f\in \Cc[x,y]$ be a trinomial and $g\in \Cc[x,y]$ be a polynomial with $N\geq 1$ monomials.
   If $q\in \CC{2}$ is an isolated solution of $f=g=0$, then 
   
    \begin{align}\label{thrmgen}
        \M{q}(f,g) \leq \frac{(N-1)(N+2)}{2}.
    \end{align}

    Furthermore, for $N=2$ or $3$, this bound is sharp.
\end{theorem}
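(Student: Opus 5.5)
First I reduce to a one-variable problem. If the Newton polytope of $f$ is one-dimensional, \autoref{propsupportdedim1} with $t_f=3$ gives $\M{q}(f,g)\le 2(N-1)$. Since $(N-1)(N+2)/2 - 2(N-1) = (N-1)(N-2)/2 \ge 0$, this is within the claimed bound. Otherwise I use \eqref{eq:cgmttore} to move $q$ to $\mathbf{1}$, and \autoref{chgmmonomial} with the matrix $A$ above to put $f$ in the form $f=b+ax-y$ with $a+b=1$. This step allows rational exponents in $g$, which is harmless after clearing denominators by a further monomial change.

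Since $f$ is linear in $y$, the curve $f=0$ is locally the smooth graph $y=ax+b$ near $\mathbf{1}$. Hence $\Mun(f,g)=\M{1}(g(x,ax+b))$, which is finite because $q$ is isolated. \autoref{lemmebornval} then bounds this by
\[
N-1+\M{1}(W_P),\qquad W_P=W_P(S_1,\dots,S_N),\quad S_j=g_jx^{a_{1j}}(ax+b)^{a_{2j}},\quad P=x(ax+b).
\]
So it suffices to prove $\M{1}(W_P)\le N(N-1)/2$, because $N-1+N(N-1)/2=(N-1)(N+2)/2$.

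To compute $W_P$ I use \autoref{lemme : deriv toric xaxb}: $\Theta_P^{(i)}(S_j)=S_jQ_{ij}$ with $\deg Q_{ij}\le i$. Factoring $S_j$ out of column $j$ gives
\[
W_P=\Big(\prod_j S_j\Big)\det(Q_{ij}),
\]
and each $S_j$ is a unit at $1$. The matrix $(Q_{ij})$ has row $i$ consisting of polynomials of degree $\le i$, for $i=0,\dots,N-1$. Its determinant is therefore a polynomial $D$ of degree at most $0+1+\dots+(N-1)=N(N-1)/2$. If $D\not\equiv 0$, then $\M{1}(D)\le\deg D\le N(N-1)/2$, which is the required bound.

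The main obstacle is showing $D\not\equiv0$. By \autoref{prop : dep leineaire et wronskien} (with $P\not\equiv0$), this is equivalent to the linear independence of the $N$ functions $x^{a_{1j}}(ax+b)^{a_{2j}}$. I would argue independence from the distinctness of the exponent pairs, using $ab\ne0$ so that $x$ and $ax+b$ are multiplicatively independent. Concretely, near $x=0$ the terms with distinct $a_{1j}$ separate by their leading exponent. Among terms with equal $a_{1j}$, the factors $(ax+b)^{a_{2j}}$ with distinct $a_{2j}$ are independent because they are distinct characters of the function $ax+b$, which is nonconstant. A degenerate case may arise when $g$ contains $f$ as a factor, but that would contradict isolatedness, so any linear dependence should force $g(x,ax+b)\equiv0$. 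Sharpness for $N=2$ comes from \autoref{propsupportdedim1}-type examples or a direct trinomial example of multiplicity $2$, and for $N=3$ from system \eqref{eqopt5}, whose multiplicity $5$ at $(1,1)$ I would verify by substituting a local parametrisation of the first curve into the second.
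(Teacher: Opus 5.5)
Your reduction to $\M{1}(g(x,ax+b))\le N-1+\M{1}(W_P)$ and your degree count for $D=\det(Q_{ij})$ coincide with the paper's argument. The step you call the main obstacle, however, cannot be carried out. The claim that $D\not\equiv 0$ is false: the functions $S_j=g_jx^{a_{1j}}(ax+b)^{a_{2j}}$ need not be linearly independent when the exponent pairs are distinct and $\mathbf{1}$ is isolated.

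Here is a counterexample. Take $a=-1$, $b=2$, so $f=2-x-y$, and take $g=y-2x+1$. The exponent pairs $(0,1),(1,0),(0,0)$ are distinct. Yet $S_1=2-x$, $S_2=-2x$, $S_3=1$ satisfy $S_1-\tfrac12 S_2-2S_3=0$, so $W_P\equiv 0$. Meanwhile $g(x,2-x)=3(1-x)$, so $\mathbf{1}$ is an isolated solution of multiplicity $1$, and $f$ does not divide $g$.

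Your leading-exponent argument breaks exactly here. After grouping the terms with a common $a_{1j}=u$, the nonzero function $x^u\sum_k c_k(ax+b)^{v_k}$ may have order at $0$ strictly larger than $u$ (here $(2-x)-2=-x$). It can then cancel against a group with larger $a_{1j}$. More generally, identities such as $(ax+b)^m=\sum_{k}\binom{m}{k}a^kb^{m-k}x^k$ produce dependencies among a sub-collection of the terms of $g$ without forcing $g(x,ax+b)\equiv 0$.

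What is missing is a treatment of the case $W_P\equiv 0$, rather than an argument excluding it. The paper proceeds by induction on $N$. A relation $\sum_j\lambda_jS_j=0$, normalized so that $\lambda_N=-1$, lets you rewrite $g(x,ax+b)=\tilde g(x,ax+b)$ with $\tilde g=\sum_{j<N}(1+\lambda_j)g_jx^{a_{1j}}y^{a_{2j}}$. This $\tilde g$ has at most $N-1$ monomials and is not identically zero, because the multiplicity is finite. Hence $\Mun(f,g)=\Mun(f,\tilde g)\le (N-2)(N+1)/2$.

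Equivalently, you can express $g(x,ax+b)$ in terms of a maximal linearly independent subfamily of the $S_j$, with adjusted coefficients. You then run your Wronskian argument on that subfamily, using that $(N-1)(N+2)/2$ is increasing in $N$. With this repair, the rest of your proof, including the sharpness examples, goes through as in the paper.
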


\begin{proof}

Up to change of coordinates \eqref{eq:cgmttore}, which preserves the number of monomials, we may assume that $q=(1,1)$.

   If the Newton polytope of $f$ is of dimension $1$, by \autoref{propsupportdedim1},
        \begin{align*}
        \M{q}(f,g) \leq 2(N-1)\leq \frac{(N-1)(N+2)}{2} .
    \end{align*}

Applying the isomorphism
\begin{equation*}
    \Cc[[x-1, y-1]] / (f, g) \simeq \Cc[[x-1]] / (g(x, ax+b))
\end{equation*}
to system \eqref{notations}, we obtain the multiplicity equality
\begin{equation*}
    \Mun(f, g) = \M{1}\big(g(x, ax+b)\big).
\end{equation*}
Then, by \autoref{lemmebornval}, this yields the bound
\begin{equation}\label{eq:inemult new}
    \Mun(f, g) \le N - 1 + \M{1}\big(\mathcal{W}(x)\big),
\end{equation}
where
\begin{equation*}
    \mathcal{W}(x) = W_P\big(g_1 x^{a_{11}}(ax+b)^{a_{21}}, \dots, g_N x^{a_{1N}}(ax+b)^{a_{2N}}\big).
\end{equation*}
We prove by induction on $N\geq 1$ that
  \begin{align*}
        \Mun(f,g) \leq \frac{(N-1)(N+2)}{2}.
    \end{align*}

 If $N=1$, then $g$ is a monomial and does not vanish on $\CC{2}$, so 
\begin{align*}
    \Mun(f,g)=0\leq \frac{(1-1)(1+2)}{2}.
\end{align*}

Let $N\geq 2$ and assume the result for systems with fewer than $N$ monomials in $g$.

We will distinguish two cases, according to whether $\mathcal{W}$ vanishes identically.

In the case where $\mathcal{W}(x) \equiv 0$, \autoref{prop : dep leineaire et wronskien} show that the functions $g_ix^{a_{1i}}(ax+b)^{a_{2i}}$ are linearly dependent.
 Let $\lambda_i\in \Cc$ not all zero, be such that
\begin{align*}
    \sum_{i=1}^N \lambda_ig_ix^{a_{1i}}(ax+b)^{a_{2i}} =0.
\end{align*}
After renumbering and rescaling, we may assume $\lambda_N=-1$, so that
\begin{align*}
        g_Nx^{a_{1N}}(ax+b)^{a_{2N}} = \sum_{i=1}^{N-1} \lambda_ig_ix^{a_{1i}}(ax+b)^{a_{2i}} .
\end{align*}

Hence
$$
    g(x,ax+b)=g_1x^{a_{11}}(ax+b)^{a_{21}}+\cdots + g_{N-1} x^{a_{1N-1}}(ax+b)^{a_{2N-1}}+\sum_{i=1}^{N-1} \lambda_ig_ix^{a_{1i}}(ax+b)^{a_{2i}} 
$$
and setting
$$
\Tilde g = g_1x^{a_{11}}y^{a_{21}}+\cdots + g_{N-1} x^{a_{1N-1}}y^{a_{2N-1}}+\sum_{i=1}^{N-1} \lambda_ig_ix^{a_{1i}}y^{a_{2i}}.
$$

If $\tilde{g}(x,y) \equiv 0$ then $g(x,ax+b)\equiv 0$ and $\Mun (f,g)=\M{1} (g(x,ax+b))=+\infty $. Since, $(1,1)$ is an isolated solution, $\Mun (f,g)<+\infty $ so 
we can assume that $\tilde{g}(x,y) \not \equiv 0$.

We obtain
\begin{align*}
    \Mun(f,g) = \Mun (g(x,ax+b))=\Mun(\Tilde g(x,ax+b))=\Mun(f,\Tilde g) .
\end{align*}

Since, $\tilde{g} $ has at most $N-1$ monomials and $\tilde{g} \not \equiv 0$, we can use the induction hypothesis
\begin{align*}
     \Mun(f,g) = \Mun(f,\Tilde g)\leq  \frac{(N-2)(N+1)}{2}\leq \frac{(N-1)(N+2)}{2}.
\end{align*}

In the case where $\mathcal{W}(x) \not\equiv 0$, we have, by the \autoref{lemme : deriv toric xaxb}

\begin{align*}
       \mathcal{W}(x)& =\sum_{\sigma\in  \mathfrak{S}_N }\epsilon(\sigma)\prod_{j=1}^{N}\Theta_P^{(j-1)}(g_{\sigma(j)}x^{a_{1\sigma(j)}}(ax+b)^{a_{2\sigma(j)}})\\
       &= \left(\prod_{j=1}^{N}g_{j}\right)x^{\sum_{i}a_{1i}}(ax+b)^{\sum_{i}a_{2i}}\sum_{\sigma\in  \mathfrak{S}_N }\epsilon(\sigma)\prod_{j=1}^{N}Q_{\sigma(j),j-1}
\end{align*}

Now, $\left(\prod_{j=1}^{N}g_{j}\right)x^{\sum_{i}a_{1i}}(ax+b)^{\sum_{i}a_{2i}}$ does not vanish at $x=1$,
 so $ \M{1}(\mathcal{W})$ equal the multiplicity of the polynomial $\sum_{\sigma\in \mathfrak{S}_N }\epsilon(\sigma)\prod_{j=1}^{N}Q_{\sigma(j),j-1}$, which has degree at most
 
$$
    \sum_{j=1}^{N}(j-1) = \frac{N(N-1)}{2}.
$$

Since $\mathcal{W} \not\equiv 0$, this polynomial is non zero, so its multiplicity at 1 is at most its degree; using \eqref{eq:inemult new}, we obtain
\begin{align}
     \Mun (f,g) = \M{1}(g(x,ax+b))&\leq N-1+\M{1} (\mathcal{W}(x))\\
     & \leq N-1+\frac{N(N-1)}{2} \label{eqin} \\ 
     & =  \frac{(N-1)(N+2)}{2}
\end{align}

To establish the sharpness of the bound for $N=2$, it suffices to observe that
\begin{align*}
    \Mun ((x-1)^2,y-1)=2.
\end{align*}

For $N=3$, \eqref{eqin} shows that the multiplicity 5 forces the polynomial \newline
 $\sum_{\sigma\in \mathfrak{S}_N }\epsilon(\sigma)\prod_{j=1}^{N}Q_{\sigma(j),j-1}$,
of degree at most $3$, to be a multiple of $(x-1)^3$.

Taking $g$ of the form
\begin{align*}
    g(x,y)=1+cx^{a_{11}}y^{a_{21}}+dx^{a_{12}}y^{a_{22}},
\end{align*}
an explicit computation of the Wronskian yields a necessary condition for the system to achieve multiplicity $5$.

We are looking for a solution to
\begin{align*}
\left|
    \begin{matrix}
        \Theta_P (x^{a_{11}}(ax+b)^{a_{21}}) &  \Theta_P (x^{a_{12}}(ax+b)^{a_{22}})  \\
        \Theta_P^{(2)} (x^{a_{11}}(ax+b)^{a_{21}}) & \Theta_P^{(2)} (x^{a_{12}}(ax+b)^{a_{22}})
    \end{matrix}
 \right| =
 C(x-1)^3,
\end{align*}
for $C\in \Cc^*$.

Then give the solution in normal form ($a=-2$, $b=3$, exponents ($1/9,5/9$) and $(16/9,-1/9)$) and
using the passage to integer exponents via $(x,y)=(X^9,Y^9)$, we found the following explicit example of a system with multiplicity $5$ at $(1,1)$:
\begin{align}\label{syst de mult 5}
    f(x,y) &= 2x^9+y^9-3, \\
    g(x,y) &= x^{16}+2xy^6-3y. 
\end{align}
This property is readily verified by substituting $y = (3-2x^9)^{\frac{1}{9}}$ into $g$ and computing its Taylor expansion around $x=1$ to the fifth order:
\begin{align*}
   g\big(x,(3-2x^9)^{\frac{1}{9}}\big) &= x^{16}+2x(3-2x^9)^{\frac{2}{3}}-3(3-2x^9)^{\frac{1}{9}} \\
   &= 26244(x-1)^5+O\big((x-1)^6\big).
\end{align*}

\end{proof}

\newpage
\nocite{*}
\bibliographystyle{alpha}
\bibliography{Intersection_multiplicity_of_a_sparse_curve_and_a_trinomial_curve}

\end{document}